\newtheorem{theorem}{Theorem}[section]
\newtheorem*{theorem*}{Theorem}
\newtheorem*{lemma*}{Lemma}
\newtheorem*{prop*}{Proposition}
\newtheorem{lemma}[theorem]{Lemma}
\newtheorem{cor}[theorem]{Corollary}
\newtheorem{prop}[theorem]{Proposition}
\theoremstyle{definition}
\newtheorem{defn}[theorem]{Definition}
\newcommand{\R}{\mathbb{R}}
\begin{document}
% -------------------------------------------------------------------%

\title[Monotone Paths on Cross-Polytopes]{Monotone Paths on Cross-Polytopes}

% -------------------------------------------------------------------%

% -------------------------------------------------------------------%

\author[A.~Black \and J.A.~De Loera]{Alexander E. Black \and Jes\'{u}s A. De Loera}

\address[AB]{Dept.\ Math., UC Davis, Davis, CA 95616, USA}
\email{aeblack@ucdavis.edu} 

\address[JDL]{Dept.\ Math., UC Davis, Davis, CA 95616, USA}
\email{deloera@ucdavis.edu}

% -------------------------------------------------------------------%

%    Abstract is required.
\begin{abstract}
In the early 1990's, Billera and Sturmfels introduced the monotone path polytope (MPP), a special case of the general theory of fiber polytopes that associates a polytope to a pair $(P,\varphi)$ of a polytope $P$ and linear functional $\varphi$. In that same paper, they showed that MPPs of simplices and hyper-cubes are combinatorial cubes and permutahedra respectively. Their work has lead to many developments in combinatorics.
Here we investigate the monotone paths for generic orientations of cross-polytopes. We show the face lattice of its MPP is isomorphic to the lattice of intervals in the sign poset from oriented matroid theory. We look at its $f$-vector, its realizations, and facets.
\end{abstract}

\maketitle

%    Text of article.

\section{Introduction} 
In their seminal paper \cite{BSFiberPoly}, Billera and Sturmfels developed a construction that, given a projection of polytopes, associates a new polytope to that projection called the \emph{fiber polytope} (see the books \cite{MYBOOK, zieg} for an introduction). Fiber polytopes have rich combinatorial structure as demonstrated by the fact that associahedra, permutahedra, cyclohedra, and other combinatorial polytopes are all  fiber polytopes of canonical projections \cite{holypaper,chapoton_fomin_zelevinsky_2002, MYBOOK, HOHLWEGetal, Baues, vic-equifiber}. Fiber polytopes are of great importance beyond algebraic and geometric combinatorics too (see for example the connections to algebraic geometry in \cite{GKZbook, Mcdonald, SturmfelsYu} and recently to theoretical physics through total positivity in \cite{nimapaper, highersecond}).

Fiber polytopes extract complicated combinatorial structure even from one-dimensional projections. The fiber polytopes of one-dimensional projections are called \emph{monotone path polytopes} (MPPs), since they are each the convex hull of all average values of monotone paths on the polytope. The MPPs of simplices for generic linear functionals are combinatorial hyper-cubes, and the MPPs of hyper-cubes for generic linear functionals are always permutahedra \cite{BSFiberPoly}. Few examples of MPPs or fiber polytopes in general beyond these special cases are known, which makes studying them difficult. In this note, we develop a new, natural class of examples in depth: the MPPs of cross-polytopes for generic orientation.

To compute the combinatorial type of monotone path polytopes, it suffices to understand the poset of cellular strings or \emph{Baues poset} (see \cite{Baues}). Cellular strings generalize monotone paths in the sense that monotone paths are increasing sequences of edges, and cellular strings are increasing sequences of faces. The face lattice of a MPP of $\diamond^{n}$ is isomorphic to the poset of \emph{coherent} cellular strings contained within the Baues poset. Coherence is a geometric restriction that we will make precise in Section \ref{sec:bg}; it intuitively means that there is a projection of the polytope to a polygon built using the linear functional that takes the cells in the string to the lower edges of that polygon. For any hyper-cube, simplex, and any edge generic orientation, all cellular strings are coherent as stated in \cite{BSFiberPoly}. The cellular strings of the simplex correspond to intervals $[A,B]$ in the poset of subsets of $[n-2]$, where $A$ is the set of endpoints of the string, and $B$ is the set of all vertices that appear anywhere in the string. However, for general polytopes, not all monotone paths are coherent, and the characterization of the coherent paths often leads to interesting combinatorics such as in the cases discussed in \cite{cubepiles, edmanthesis, Hypersimps}. 
 
 In this paper, we study the monotone paths and the cellular strings of the standard $n$-dimensional \emph{cross-polytope} $\diamond^{n}$ given by the convex hull of the $2n$ vectors $e_1,-e_1,e_2,-e_2,\dots,e_n,-e_n$. As a polyhedron, $\diamond^{n}$ is given by the $2^n$ inequalities of the form  $\pm x_1 \pm x_2 \dots \pm x_d \leq 1$ for all possible sign choices. Cross-polytopes are famous simplicial polytopes polar to cubes, and a subset of vertices of the cross-polytope is a face if and only if it does not contain pairs of antipodes. In what follows, $P^{\Delta}$ denotes the polar dual of a polytope $P$. With these facts in mind, we may state our main result:
 
\begin{theorem}
\label{MainTheorem1} For the standard cross-polytope $\diamond^{n}$ and for any generic linear functional $\varphi$ such that $\varphi(e_{i}) = a_{i}$ for all $i \in [n]$, 
\begin{enumerate}

\item[(a)] If $0 < a_{1} < a_{2} < \dots < a_{n}$, the set of vertices of $MPP_{\varphi}(\diamond^{n})$ is precisely:
\begin{align*}
   &\Bigg{\{} \left(1 - \frac{a_{i_{k}} + a_{i_{1}}}{2a_{n}}\right)e_{n} + \sum_{i = 1}^{k} \left(\frac{a_{i_{k-1}} + a_{i_{k+1}}}{2a_{n}}\right) e_{i_{k}}: \\
    &-n = i_{0} < \dots < i_{k+1} = n \text{ and } i_{a} \neq -i_{b} \text{ for all } a,b \in [k]\Bigg{\}}.
\end{align*}

\item[(b)] There is an explicit polyhedral realization of $MPP_{\varphi}(\diamond^{n})$. If $0< a_{1} < a_{2} < \dots < a_{n}$, then $MPP_{\varphi}(\diamond^{n})$ is given by
    \[\{x \in \R^{n}: \varphi(x) = 0 \text{ and } \varphi_{i, \varepsilon}(x) \geq -a_{i} - a_{n}, \varepsilon: [n-1] \to \{\pm 1\}, k \in [n-1]\},\]
where we define $\varphi_{i, \varepsilon}$ on the basis $F_{1} \cup F_{2} \cup \{e_{n}\}$ by
\[\varphi_{i, \varepsilon}(e_{k}) = \begin{cases}  -a_{k} - a_{n} \text{ if } k \in F_{1} \\   \frac{a_{i} + a_{n}}{a_{n} - a_{i}}(a_{k} - a_{n}) \text{ if } k \in F_{2}\\ 0 \text{ if } k = n\end{cases}\]
for $F_{1} = \{k: \varepsilon(k)k \leq i\}$ and $F_{2} = \{k: \varepsilon(k)k \geq i\}$.

\item[(c)] We have $MPP_{\varphi}(\diamond^{n})$ is combinatorially equivalent to the cubical complex formed by gluing together all unit cubes of dimension $\leq n -2$ with vertices contained in $\{\pm 1, 0\}^{n-1} \setminus \{\mathbf{0}\}$. The face lattice of $MPP_{\varphi}(\diamond^{n})$ is isomorphic to the lattice of intervals in the sign poset $\{0, +, -\}^{n-1} \setminus \{\mathbf{0}\}$ ordered under inclusion. 

\item[(d)] Furthermore, $MPP_{\varphi}(\diamond^{n})$ is combinatorially equivalent to $(C_{n-1} + \diamond^{n-1})^{\Delta}$, where $C_{n} = [-1,1]^{n}$ is the $n$-dimensional regular cube.

\item[(e)] The $f$-vector of $MPP_{\varphi}(\diamond^{n})$ is given by \[f_{m}(MPP_{\varphi}(\diamond^{n})) = \sum_{k=1}^{n-m-1} \binom{n-1}{k,m, n-k-m-1} 2^{k+m}. \]
Hence, $MPP_{\varphi}(\diamond^{n})$ has precisely $3^{n-1} -1$ vertices. In particular, they correspond to a sign vector in $\{0, +, -\}^{n-1} \setminus \mathbf{0}$.

\item[(f)] Two vertices in $MPP_{\varphi}(\diamond^{n})$ are adjacent if and only if their corresponding vectors are distance $1$ from one another in the Taxi Cab metric. As a result, $\text{diam}(MPP_{\varphi}(\diamond^{n})) = 2(n-1)  = (n-1)\text{diam}(\diamond^{n}).$

\item[(g)] The total number of monotone paths in $\diamond^{n}$ is precisely $\frac{2^{2n-1} - 2}{3}.$ Not all paths are coherent. The diameter of the entire flip graph of $\diamond^{n}$ is $2(n-1)$, and the longest flip distance to the nearest coherent path is $n-2$.
\end{enumerate}
\end{theorem}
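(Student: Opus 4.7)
The plan is to deduce all seven parts from one structural result: a bijection between coherent monotone paths on $\diamond^n$ for generic $\varphi$ with $0<a_1<\cdots<a_n$ and sign vectors in $\{0,+,-\}^{n-1}\setminus\{\mathbf{0}\}$. Because $\diamond^n$ has an edge between every non-antipodal vertex pair, a monotone path corresponds to a sequence $-n=i_0<i_1<\cdots<i_{k+1}=n$ of signed indices (with $e_{-j}=-e_j$) whose consecutive entries are non-antipodal, and a direct recursion case-splitting on how many of $\{+j,-j\}$ each $j$ contributes yields the total count $\tfrac{2^{2n-1}-2}{3}$ in~(g).

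The central technical step is showing a monotone path is coherent iff it uses at most one of $\{+e_j,-e_j\}$ for each $j<n$. Since any linear $\psi:\R^n\to\R$ satisfies $\psi(-v)=-\psi(v)$, if a path uses both $\pm e_j$ for some $j<n$ then its prefix from $-e_n$ to $-e_j$ and its suffix from $e_j$ to $e_n$ have identical total $\varphi$-length $a_n-a_j$ and identical total $\psi$-change, hence equal average slopes. Strict convexity of the lower envelope then forces the last prefix slope to be at least that average and the first suffix slope to be at most it, contradicting the strict slope increase required across the middle segment from $-e_j$ through the intermediate vertices to $e_j$. Conversely, for each sign vector $\sigma$ I would construct an explicit $\psi_\sigma$ placing the unused antipodes at large $\psi$-values and giving strictly increasing slopes along the path encoded by $\sigma$, so that the lower envelope of $(\varphi,\psi_\sigma)$ on $\diamond^n$ traces out precisely that path.

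Part~(a) then follows by substituting $\varphi(\pm e_j)=\pm a_j$ into the Billera--Sturmfels $\varphi$-weighted average-of-midpoints formula for MPP vertices. For~(c) and~(e), I extend the vertex bijection to the whole Baues poset of coherent cellular strings: each such string corresponds to an interval $[\sigma,\tau]$ in the sign poset on $\{0,+,-\}^{n-1}\setminus\{\mathbf{0}\}$, where $\sigma$ records the ``committed'' sign coordinates of the face and $\tau$ records the support of vertices appearing anywhere in the string. Each interval is a combinatorial $m$-cube with $m$ the number of coordinates where $\sigma_i=0$ but $\tau_i\neq 0$; summing $\binom{n-1}{k,m,n-k-m-1}2^{k+m}$ over the number $k\geq 1$ of committed-nonzero coordinates gives the $f$-vector in~(e), and the cubes glue along common subintervals to produce the cubical complex of~(c). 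Part~(f) is then immediate: an edge of $MPP_\varphi(\diamond^n)$ corresponds to a sign-interval of length~$1$, so adjacency is taxicab distance~$1$ and the diameter $2(n-1)$ is realized by $(+,\ldots,+)$ versus $(-,\ldots,-)$.

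For~(b), I would verify directly that each half-space $\{\varphi_{i,\varepsilon}\geq -a_i-a_n\}$ is tight exactly on the sign-vector vertices compatible with $(i,\varepsilon)$, with facet count matching $f_{n-2}$ from~(e); the check decomposes cleanly along the partition $F_1\cup F_2\cup\{n\}$. Part~(d) then follows by matching this $H$-description with the polar of $C_{n-1}+\diamond^{n-1}$, whose facets arise from matching outer normal cones of the Minkowski summands. For the flip-distance claim in~(g), each antipodal pair $\{+e_j,-e_j\}$ used by a non-coherent path can be resolved by a single triangle flip across a 2-face of $\diamond^n$ near either antipode, giving flip distance $\leq n-2$; tightness is achieved by the path visiting $\pm e_j$ for all $j\in\{2,\ldots,n-1\}$. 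The main obstacle will be the coherence characterization: the slope/symmetry argument above is the combinatorial heart of the theorem, and all other parts reduce to explicit substitution or to transport through the sign-poset bijection.
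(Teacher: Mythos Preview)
Your proposal is essentially correct and follows the same architecture as the paper: the coherence characterization via the antipodal obstruction (your slope/symmetry argument is a mild rephrasing of the paper's direct slope comparison), the sign-poset/interval description of the Baues poset, and the subsequent bookkeeping for (a), (c), (e), (f) all match. For (b) the paper derives the facet functionals from Ziegler's shadow-boundary recipe rather than by direct verification, and for (d) it computes the face lattice of $C_{n-1}+\diamond^{n-1}$ explicitly (noting Bj\"orner's antiprism result as an alternative), but these are cosmetic differences.

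The one genuine gap is in (g). First, you do not address the diameter of the full flip graph at all; the paper establishes the bound $2(n-1)$ by a somewhat delicate case analysis on the minimal positive and maximal negative entries of the two paths, and this does not fall out of the sign-poset picture. Second, your argument for the distance to the nearest coherent path (``each antipodal pair can be resolved by a single triangle flip'') is incomplete as stated: removing one member of an antipodal pair can create consecutive antipodes in the path (e.g.\ deleting $e_2$ from $\dots,-e_3,e_2,e_3,\dots$), so you must specify an order of removals that avoids this. The paper handles this by always deleting from the same side of the pair $(s_-,s_+)$; your bound $n-2$ is correct, but the mechanism needs that extra sentence.
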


The combinatorial types of these MPPs correspond exactly to the poset of intervals of the sign poset from oriented matroid theory as one would find in Chapter 7 of \cite{zieg}. For this reason, we call polytopes of this combinatorial type \emph{signohedra.} One may view this result as a type $B$ analog to the case of simplices. Namely, the MPPs of simplices are cubes. The face lattice of a cube corresponds to the lattice of intervals of subsets of $[n]$. The type $B$ analog of the simplex is the cross-polytope, and the type $B$ analog of subsets of $[n]$ is the sign poset. Then we may view the signohedron as a type $B$ cube. 

Furthermore, via a functorial lemma proven in \cite{BSFiberPoly}, projections take MPPs to MPPs. The projections of the cross-polytopes are precisely the \emph{centrally symmetric polytopes} or equivalently the set of polyhedral unit balls in $\R^{d}$. Thus, Theorem \ref{MainTheorem1} yields the following corollary: 

\begin{cor}
\label{cor:MainCS}
Let $P$ be a centrally symmetric polytope with $2n$ vertices $\pm v_{1}, \dots, \pm v_{n}$ and linear functional $\ell$ such that $0 < \ell(v_{1}) < \dots < \ell(v_{n})$. Let $a_{i} = \ell(v_{i})$. Then we have the following:
\begin{align*}
    MPP_{\ell}(P) &= \text{conv}\Bigg{(}\Bigg{\{}\left(1 - \frac{a_{i_{k}} + a_{i_{1}}}{2a_{n}}\right)(v_{n}) + \sum_{i = 1}^{k} \left(\frac{a_{i_{k-1}} + a_{i_{k+1}}}{2a_{n}}\right) v_{i_{k}}: \\
    &-n = i_{0} < \dots < i_{k+1} = n \text{ and } i_{a} \neq -i_{b} \text{ for all } a,b \in [k]\Bigg{\}}\Bigg{)}.
\end{align*}
\end{cor}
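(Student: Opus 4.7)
The proof will be a direct application of the functoriality of fiber polytopes to Theorem \ref{MainTheorem1}(a). The approach is to exhibit any centrally symmetric polytope as a linear image of the standard cross-polytope, transport the linear functional back to $\diamond^{n}$, and then push forward the MPP.

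First I would define the surjective linear map $\pi: \R^{n} \to \R^{d}$ by $\pi(e_{i}) = v_{i}$, so that $\pi(-e_{i}) = -v_{i}$. Because $P = \conv\{\pm v_{1}, \dots, \pm v_{n}\}$, we have $P = \pi(\diamond^{n})$. Setting $\varphi = \ell \circ \pi$, this functional satisfies $\varphi(e_{i}) = \ell(v_{i}) = a_{i}$, and the assumption $0 < a_{1} < \dots < a_{n}$ guarantees that $\varphi$ is sufficiently generic on $\diamond^{n}$ for Theorem \ref{MainTheorem1}(a) to apply.

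Next I would invoke the functorial property of fiber polytopes from \cite{BSFiberPoly}: for the composition of projections $\diamond^{n} \xrightarrow{\pi} P \xrightarrow{\ell} \R$, there is an induced surjection $MPP_{\varphi}(\diamond^{n}) \twoheadrightarrow MPP_{\ell}(P)$, given precisely by $\pi$ itself. Hence $MPP_{\ell}(P) = \pi(MPP_{\varphi}(\diamond^{n}))$, so $MPP_{\ell}(P)$ is the convex hull of the images under $\pi$ of the vertices of the cross-polytope MPP listed in Theorem \ref{MainTheorem1}(a). Since $\pi$ is linear, applying it coefficient-wise to the vertex formula, and adopting the convention $v_{-i} := -v_{i}$ corresponding to $\pi(-e_{i}) = -v_{i}$, yields exactly the expression in the statement of the corollary.

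The main obstacle is purely bookkeeping: one must confirm that the image under $\pi$ of each vertex of $MPP_{\varphi}(\diamond^{n})$ takes the stated form once the signs $i_{a} < 0$ are interpreted via $\pi(e_{i_{a}}) = v_{i_{a}}$, and that the convex hull (rather than an explicit vertex list) is the correct form of the answer, since $\pi$ may identify distinct vertices of the cross-polytope MPP or send some of them into the interior of faces of the image.
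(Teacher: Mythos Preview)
Your proposal is correct and takes essentially the same approach as the paper: the paper's proof is a one-line pointer saying to apply Lemma~2.3 of \cite{BSFiberPoly} (the functorial lemma) to Theorem~\ref{MainTheorem1}(a), and you have simply unpacked what that entails. Your extra remarks about the convention $v_{-i} = -v_i$ and about why only a convex-hull description (not a vertex list) survives the projection are helpful clarifications that the paper leaves implicit.
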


Note that a similar projection result may be found for all polytopes from projections of simplices and for all zonotopes from projections of cubes. The case for projections of cross-polytopes is more interesting in the sense that some monotone paths are incoherent, and no projection of an incoherent path may be coherent. Our result thus tells us that the longest coherent monotone path on a centrally symmetric polytope with $2n$ is vertices is at most $n$. Therefore, there cannot exist a centrally symmetric equivalent to the Goldfarb cube from \cite{DefProds} in which a coherent monotone path uses all of the vertices of the polytope. 

Understanding the structure of monotone paths on centrally symmetric polytopes could yield insight into the polynomial Hirsch conjecture (see \cite{Hirsch}). That conjecture asks for a polynomial bound on the lengths of paths on polytopes in terms of the number of facets and dimension. This conjecture is of fundamental interest in applications due to its relationship to the run-time of the simplex method for linear programming. The same question for shortest coherent monotone paths also remains open and is connected to the study of the shadow vertex pivot rule studied in depth in \cite{borgwardt} and more recently in \cite{smoothedanalysis}.

%For example, one may ask whether there is an analog to the construction of the associahedron as the fiber polytope of the projection of the simplex to an $n-$gon by instead studying the fiber polytopes of projections of the cross-polytope to centrally symmetric $2n-$gons. These are substantially more difficult to study. In Section \ref{sec:conclusions}, we will discuss why these are difficult and some ideas for pursuing that direction.

%For 2 dimensions, our goal is to see whether  Going one dimension up already increases the difficulty substantially. One main distinction is that all triangulations of an $n$-gon are regular, but not all triangulations of a centrally symmetric $2n$-gon are centrally regular. Furthermore, the number that are and the relationship they have to each other depends on the particular choice of a $2n-$gon. In our discussion, we start first by giving a fairly complete description of the $\pi-$induced subdivisions and then discuss some computational results toward understanding the centrally regular triangulations. 

\section{Background}
\label{sec:bg} 
Throughout this paper, we rely on a familiarity with convex polytopes at the level of \cite{zieg}. A comprehensive reference for the structure of MPPs may be found in \cite{Baues}, but this section will be sufficient to understand our results.

\begin{defn}
A \emph{monotone path polytope} (MPP) of a polytope $P$ and orientation induced by a linear functional $\ell: P \to \R$ is the fiber polytope induced by the map $\ell: P \to \ell(P)$. In particular, the monotone path polytope is given by 
\[MPP_{\ell}(P) = \text{conv}\left(\left\{\int_{\ell(P)} s(x) dx: s \text{ is a section of }\ell  \right\}\right). \]
\end{defn}

Furthermore, Billera and Sturmfels showed in the same paper that the integrals of the sections of monotone paths generate the monotone path polytope. This observation yields a finite generating set for that infinite space. From this result, we obtain a method to compute monotone path polytopes.

\begin{theorem*}[Restatement of Theorem 5.3 from \cite{BSFiberPoly}]
\label{thm:mppgenerator}
For a linear functional $\varphi$ and polytope $P$ with vertices ordered by the linear functional $p_{1}, p_{2}, \dots, p_{n}$, let $M$ be the set of subsets $S$ of $[n]$ such that $\{p_{s}: s \in S\}$ is a monotone path. Then we have  
\[MPP_{\varphi}(P) = \text{conv}\left(\left\{\sum_{j=2}^{|S|} \frac{\varphi(p_{i_{j}} - p_{i_{j-1}})}{2 \varphi(p_{n} - p_{1})}(p_{i_{j-1}} + p_{i_{j}}): i_{j} \in S, S \in M \right\} \right).\]
\end{theorem*}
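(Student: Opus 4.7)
The plan is to associate a canonical piecewise linear section to each monotone path, compute its integral explicitly, and then establish the equality by two convex-hull inclusions. The nontrivial inclusion will rely on characterizing the extremal sections of $\varphi$.

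First I would make precise the correspondence between monotone paths and sections. Given a monotone path $S = (p_{i_1}, \ldots, p_{i_k})$ with $\varphi(p_{i_1}) < \cdots < \varphi(p_{i_k})$, define the edge-tracing section $s_S: \varphi(P) \to P$ to be the map that linearly interpolates between $p_{i_{j-1}}$ and $p_{i_j}$ on each subinterval $[\varphi(p_{i_{j-1}}), \varphi(p_{i_j})]$. Direct integration on each subinterval uses the identity
\[\int_a^b \left(p + \frac{t-a}{b-a}(q-p)\right) dt = \frac{b-a}{2}(p+q),\]
and summing over the $k-1$ segments of $S$ reproduces the stated formula for $v_S := \int_{\varphi(P)} s_S(x)\, dx$ after dividing by $\varphi(p_n - p_1)$, as is standard in the Billera--Sturmfels normalization.

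The containment $\text{conv}\{v_S : S \in M\} \subseteq MPP_\varphi(P)$ is immediate because each $s_S$ is a section, so $v_S$ lies in $MPP_\varphi(P)$ by definition, and the MPP is convex. For the reverse inclusion, I would characterize the extremal sections. For an edge-generic $\varphi$ and a generic height $t \in \varphi(P)$, the fiber $\varphi^{-1}(t) \cap P$ is a polytope of dimension $\dim P - 1$ whose vertices are the crossings with $\{\varphi = t\}$ of those edges of $P$ straddling height $t$. A section $s$ whose value at some generic $t$ lies in the relative interior of the fiber can be written as a nontrivial pointwise convex combination $s = \tfrac{1}{2}(s' + s'')$ of two other sections, so any extremal section must take values in vertices of the fibers for almost every $t$; this forces it to trace out edges monotonically and hence to coincide with some $s_S$. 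Since $s \mapsto \int s$ is affine, it sends extremal sections to extreme points of $MPP_\varphi(P)$, yielding $MPP_\varphi(P) \subseteq \text{conv}\{v_S\}$.

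The main obstacle is making the extremality step rigorous across the critical heights $\varphi(p_i)$, where the combinatorial type of the fiber changes and the argument via pointwise perturbation breaks down. The cleanest route, followed by Billera--Sturmfels, is to invoke the general fiber polytope theorem identifying the vertices of $\Sigma(P, \varphi(P))$ with $\varphi$-coherent tight subdivisions of $\varphi(P)$; for one-dimensional projections these are precisely the coherent monotone paths. Enlarging the indexing set from coherent monotone paths to all monotone paths only adds points already contained in $MPP_\varphi(P)$, so the convex hull is unchanged and the stated generating set suffices.
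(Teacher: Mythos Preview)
The paper does not supply a proof of this statement: it is quoted verbatim as Theorem~5.3 of \cite{BSFiberPoly} and used as a black box, so there is no in-paper argument to compare against. Your sketch is essentially the outline of the original Billera--Sturmfels proof (integrate the piecewise-linear section attached to each monotone path, then identify the extreme points of the section space with edge-tracing paths via the general fiber-polytope theorem), and it is correct at the level of detail given.

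One small wording issue: the integration map $s \mapsto \int s$ being affine does not by itself send extremal sections to \emph{extreme points} of the image; what you actually need, and what the fiber-polytope theorem provides, is that every extreme point of $MPP_\varphi(P)$ is the image of some extremal (coherent) section, which is then necessarily one of the $s_S$. You invoke exactly that theorem in your final paragraph, so the argument does close.
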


The monotone paths that give rise to vertices in the resulting monotone path polytope are called coherent. Faces of the monotone path polytope correspond to coherent cellular strings, a generalization of coherent paths.

\begin{defn}[Page $13$ of \cite{Baues}]
\label{def:cohstring}
Fix a $d-$polytope $P$ and orientation $\ell \in (\R^{d})^{\ast}$. A \emph{cellular string} is a sequence of faces $F_{1}, F_{2}, \dots, F_{n}$ that satisfies the following:
\begin{enumerate}
    \item[(i)] $v_{min} \in F_{1}$ and $v_{max} \in F_{n}$, where $v_{\text{min}}$ and $v_{\text{max}}$ are minimal and maximal vertices respectively with respect to $\ell$.
    \item[(ii)] $\ell$ is non-constant on any $F_{i}$.
    \item[(iii)] For each $i$, the $\ell-$maximizing face of $F_{i}$ is the $\ell-$minimizing face of $F_{i+1}$.
\end{enumerate}
A cellular string is called \emph{coherent} if there exists some linear functional $\ell' \in (\R^{d})^{\ast}$ such that $\bigcup_{i=1}^{n} F_{i}$ is the union of all points $x \in \ell(P)$ of the $\ell'-$minimal points in the fibers $\ell^{-1}(x)$. More concretely, a cellular string is coherent if there is a projection of the whole polytope taking the cellular string to the lower edges of a polygon such that the composition of this projection to the polygon and the map casting a shadow from the polygon given by $\ell.$
\end{defn}

Using the tools we know about fiber polytopes, we may completely describe the face lattice of monotone path polytopes by identifying each face with a coherent cellular string. As an immediate consequence of Theorem $2.1$ in \cite{BSFiberPoly}, the face lattice of a monotone path polytope is equivalent to the lattice of coherent cellular strings with the partial order induced by the refinement of subdivisions.

 We will use this connection to give a complete description of the MPPs of cross-polytopes up to combinatorial equivalence. Applying this result, the coherent monotone paths may be mapped to the lower vertices of some two dimensional projection. Knowing this property yields a simple geometric obstruction to coherence captured by Figure \ref{fig:incoherentpath}. Namely, for any polytope $P$, any coherent monotone path $v_{1}, v_{2}, \dots, v_{n}$ on $P$ must satisfy \[\text{conv}(v_{i}, v_{j}) \cap \text{conv}(\bigcup_{k=i+1}^{j-1} v_{k}) = \emptyset,\]
since the ordered lower vertices of a polygon must satisfy this condition. From this observation, we obtain a general result for coherent monotone paths on centrally symmetric polytopes. Namely, a coherent monotone path on a centrally symmetric polytope cannot contain a pair of antipodes other than its min and max, because convex hulls of distinct pairs of antipodes must intersect as in Figure \ref{fig:incoherentpath}.

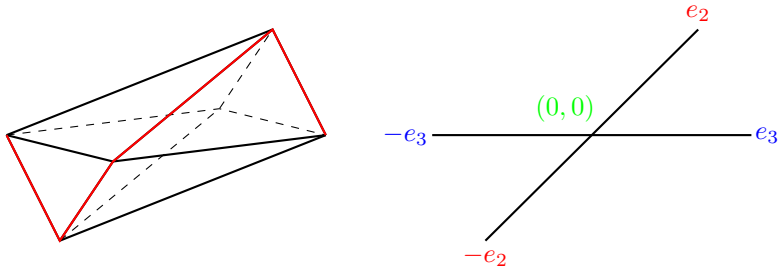
\begin{figure}
    \centering
    \begin{tikzpicture}[scale = .7]
    \draw[black, thick] (-3,0) -- (2,2) -- (3,0);
    \draw[black, dashed] (-3,0) -- (1,.5) -- (3,0);
    \draw[black, thick] (-3,0) -- (-2,-2) -- (3,0);
    \draw[black, thick] (-3,0) -- (-1,-.5) -- (3,0);
    \draw[black, thick] (-2,-2) -- (-1,-.5) -- (2,2);
    \draw[black, dashed] (-2,-2) -- (1,.5) -- (2,2);
    \draw[red, thick] (-3,0) -- (-2,-2) -- (-1, -.5) -- (2,2) -- (3,0);
    \draw[black, thick] (5,0) -- (11,0);
    \draw[black, thick] (6,-2) -- (10,2);
    \draw (4.5, 0) node[blue] {$-e_{3}$};
    \draw (11.3, 0) node[blue] {$e_{3}$};
    \draw (10,2.3) node[red] {$e_{2}$};
    \draw (6,-2.3) node[red] {$-e_{2}$};
    \draw (7.5,.5) node[green] {$(0,0)$};
    \end{tikzpicture}
    \caption{The left figure shows an example of an incoherent path on the octahedron. The obstruction is pictured in the right, since the path contains $-e_{3}, -e_{2}, e_{2},$ and $e_{3}$.  }
    \label{fig:incoherentpath}
\end{figure}

This intuitive geometric obstruction for any centrally symmetric polytope turns out to be the only obstruction to coherence of monotone paths on the cross-polytopes. We will generalize this observation to cellular strings in the next section. The last general fact we require is the following functorial lemma from \cite{BSFiberPoly} that allows for the computation of the monotone path polytope of a projection of a polytope.

\begin{lemma*}[Lemma $2.3$ from \cite{BSFiberPoly}]
\label{ProjLemma}
	Let $P \xrightarrow{\theta} Q \xrightarrow{\varphi} R$ be a sequence of surjective affine maps of polytopes. Then $\Sigma(Q,R) = \theta(\Sigma(P,R))$, where $\Sigma(A,B)$ denotes the fiber polytope for a projection from $A$ to $B$ for polytopes $A$ and $B$. In particular, when $\varphi$ is a linear functional, we find that $MPP_{\varphi}(Q) = \theta(MPP_{\varphi \circ \theta}(P)).$
\end{lemma*}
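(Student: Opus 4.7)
The plan is to work directly from the Minkowski-integral characterization of the fiber polytope, which (after normalization) represents $\Sigma(P,R)$ as the convex hull of integrals of measurable sections:
\[\Sigma(P,R) = \mathrm{conv}\left\{\tfrac{1}{\mathrm{vol}(R)}\int_R s(x)\,dx : s \text{ a section of } \varphi\circ\theta\right\},\]
and analogously $\Sigma(Q,R)$ as the convex hull of averages of sections $t$ of $\varphi$. Since $\theta$ is affine and affine images commute with taking convex hulls, to prove $\Sigma(Q,R)=\theta(\Sigma(P,R))$ it suffices to establish equality (or matching containments) on these generating sets of averaged sections.

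For the inclusion $\theta(\Sigma(P,R))\subseteq\Sigma(Q,R)$, I would observe that whenever $s$ is a section of $\varphi\circ\theta$, the composite $\theta\circ s$ is a section of $\varphi$, since $\varphi\circ(\theta\circ s)=(\varphi\circ\theta)\circ s=\mathrm{id}_R$. Because $\theta$ is affine it commutes with Lebesgue integration, so $\theta\bigl(\tfrac{1}{\mathrm{vol}(R)}\int_R s(x)\,dx\bigr)=\tfrac{1}{\mathrm{vol}(R)}\int_R \theta(s(x))\,dx$, and this is a generator of $\Sigma(Q,R)$. This direction is purely formal.

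The reverse inclusion $\Sigma(Q,R)\subseteq\theta(\Sigma(P,R))$ is where the real work sits: given a measurable section $t$ of $\varphi$, I need a measurable lift, i.e.\ a section $s$ of $\varphi\circ\theta$ with $\theta\circ s=t$. For every $x\in R$ the fiber $\theta^{-1}(t(x))\subseteq P$ is a nonempty polytope (nonempty because $\theta$ is surjective onto $Q$), and these fibers depend in a piecewise-polyhedral way on $x$. A measurable selection argument produces the desired $s$; the cleanest route is to refine $R$ by the (finitely many) polyhedral cells on which both the fiber structure of $\theta$ and the section $t$ are affine, and then on each cell to pick a vertex of the fiber consistently, yielding a piecewise-affine $s$. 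Once $s$ exists, affineness of $\theta$ again gives $\theta\bigl(\tfrac{1}{\mathrm{vol}(R)}\int_R s(x)\,dx\bigr)=\tfrac{1}{\mathrm{vol}(R)}\int_R t(x)\,dx$, showing each generator of $\Sigma(Q,R)$ lies in $\theta(\Sigma(P,R))$. I expect the measurable-lift step to be the only real obstacle; everything else is a formal interchange of $\theta$ with integration and convex hulls.

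Finally, the specialization to monotone path polytopes is immediate: when $\varphi$ is a linear functional, $R=\varphi(Q)\subseteq\mathbb{R}$ is a segment, and by definition $\Sigma(Q,R)=MPP_\varphi(Q)$ and $\Sigma(P,R)=MPP_{\varphi\circ\theta}(P)$, so the general identity becomes $MPP_\varphi(Q)=\theta(MPP_{\varphi\circ\theta}(P))$ as claimed.
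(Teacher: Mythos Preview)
The paper does not actually prove this lemma; it is quoted verbatim as Lemma~2.3 of Billera--Sturmfels and used as a black box. So there is no ``paper's own proof'' to compare against.

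That said, your proposal is essentially the standard argument and is correct in outline. A couple of remarks. First, the measurable-selection step can be sidestepped entirely: Billera--Sturmfels show that $\Sigma(Q,R)$ is a polytope whose vertices come from \emph{tight coherent} sections of $\varphi$, which are piecewise-linear over a polyhedral decomposition of $R$. For such a section $t$ the fibers $\theta^{-1}(t(x))$ vary polyhedrally over each cell, and choosing (say) the barycenter or a fixed vertex gives an explicit piecewise-linear lift $s$ with $\theta\circ s=t$. Since convex hulls are determined by vertices, this suffices for the reverse inclusion without any general measurable-selection machinery. Second, a small technical caveat in your sketch: you propose refining $R$ by cells ``on which \ldots\ the section $t$ [is] affine,'' but an arbitrary measurable section need not be piecewise affine on any finite decomposition. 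Either restrict to the tight sections as above, or invoke a genuine measurable-selection theorem (e.g.\ Kuratowski--Ryll-Nardzewski) for the set-valued map $x\mapsto\theta^{-1}(t(x))$; the latter works but is heavier than the problem requires.
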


This lemma allows for the computation of monotone path polytopes of any centrally symmetric polytope as the projection of a signohedron and makes Corollary \ref{cor:MainCS} immediate from the proof of Theorem \ref{MainTheorem1}(a). 

\section{Signohedra: Monotone Paths on Cross-Polytopes}
\label{sec:signo}

In this section, we completely describe signohedra and equivalently the monotone path polytopes on cross-polytopes via the proof of Theorem \ref{MainTheorem1}. To start studying the MPPs of cross-polytopes for a generic orientation, we must clarify what constitutes a generic orientation. Using this notion of generic, we will fix an ordering of the vertices of the cross-polytope that will be used for all remaining computations of the MPP.

\begin{lemma}
\label{lem:cpgenorient}
Generically, a monotone path polytope of $\diamond^{n}$ is affinely equivalent to one obtained from a linear functional with distinct positive values for each $e_{i}$.
\end{lemma}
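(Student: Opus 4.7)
The plan is to exploit the symmetry group of $\diamond^{n}$, namely the hyperoctahedral group of signed permutations of the coordinate basis, to reduce any generic linear functional to one of the desired standard form.

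First, I would make precise what ``generic'' means here: an orientation $\varphi$ with $\varphi(e_{i}) = a_{i}$ is edge-generic on $\diamond^{n}$ precisely when the $2n$ values $\pm a_{1}, \ldots, \pm a_{n}$ are pairwise distinct, i.e.\ $a_{i} \neq 0$ for every $i$ and $|a_{i}| \neq |a_{j}|$ for $i \neq j$. These inequalities cut out an open dense subset of $\R^{n}$, and any $\varphi$ violating them places two vertices of $\diamond^{n}$ on a common level hyperplane, which is exactly the non-generic locus.

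Second, I would record the general principle that the MPP is invariant, up to linear equivalence, under precomposition with a linear automorphism of the underlying polytope. Concretely, if $M : \R^{n} \to \R^{n}$ is a linear map with $M(\diamond^{n}) = \diamond^{n}$ and $\psi := \varphi \circ M$, then the assignment $s \mapsto M \circ s$ is a bijection between sections of $\psi$ and sections of $\varphi$, and averaging commutes with $M$. Since $\psi(\diamond^{n}) = \varphi(\diamond^{n})$, integration over this common interval gives $MPP_{\varphi}(\diamond^{n}) = M \cdot MPP_{\varphi \circ M}(\diamond^{n})$, so the two MPPs are linearly, hence affinely, equivalent.

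Third, I would construct the required automorphism explicitly. Given generic $(a_{1}, \ldots, a_{n})$, let $\pi \in S_{n}$ be the unique permutation with $|a_{\pi(1)}| < |a_{\pi(2)}| < \cdots < |a_{\pi(n)}|$, and set $\epsilon_{i} = \mathrm{sign}(a_{\pi(i)}) \in \{\pm 1\}$. Define $M$ on the basis by $M e_{i} = \epsilon_{i} e_{\pi(i)}$; this is a signed permutation matrix and therefore a linear automorphism of $\diamond^{n}$. A direct computation gives
\[ (\varphi \circ M)(e_{i}) = \epsilon_{i} a_{\pi(i)} = |a_{\pi(i)}|, \]
so $\varphi \circ M$ takes strictly positive and strictly increasing values $0 < |a_{\pi(1)}| < \cdots < |a_{\pi(n)}|$ on $e_{1}, \ldots, e_{n}$. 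Combining with the invariance from the second step completes the reduction. There is no real obstacle to this argument; the only subtlety is ensuring that ``generic'' is interpreted in the edge-generic sense, since that is exactly what guarantees the $|a_{i}|$ are distinct and nonzero and hence that the signed permutation $M$ is unambiguously determined.
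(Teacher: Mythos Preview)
Your proof is correct and follows essentially the same approach as the paper: both use the hyperoctahedral symmetry group of $\diamond^{n}$ (signed permutations) to normalize a generic functional to one with $0 < a_{1} < \cdots < a_{n}$, and both then appeal to the fact that a linear automorphism of the polytope induces an affine equivalence of MPPs. The only cosmetic difference is that the paper cites the functorial Lemma~2.3 of \cite{BSFiberPoly} for this last step, whereas you verify it directly via the bijection $s \mapsto M \circ s$ on sections.
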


\begin{proof}
The vertex generic linear functionals on the cross-polytope are precisely those with distinct nonzero values of coefficients $a_{i}$ for each $e_{i}$. Furthermore, they each map $e_{i}$ to $a_{i}$, where up to a change in indices, $|a_{1}| < |a_{2}| < \dots < |a_{n}|$. Then, by applying the reflection map taking $e_{i} \mapsto -e_{i}$, we may assume that each $a_{i}$ is positive. The cross-polytope $\diamond^{n}$ has vertices $\pm e_{i}$, so under this map, the vertices are ordered such that $-e_{i} < e_{i}$ for all $i \in [n]$ and $e_{j} < e_{k}$ for all $j < k$ in $[n]$. Hence, up to a permutation and reflection, we always obtain the same vertex ordering. Since these symmetries are linear, by Lemma $2.3$ from \cite{BSFiberPoly}, the affine isomorphism from the cross-polytope to itself induces an affine isomorphism of the monotone path polytopes.
\end{proof}

For cubes and simplices, all monotone paths are coherent. This property makes understanding their monotone path polytopes easier. For cross-polytopes with an orientation given by a generic linear functional, the monotone paths need not all be coherent as noted in Section \ref{sec:bg}. The following theorem is the primary technical fact from which all of our remaining work on the characterization follows. 

\begin{theorem}
\label{CoherenceThm}
A cellular string on $\diamond^{n}$ is coherent if and only if the set of vertices that appear in the string only contains one pair of antipodes, namely the maximum and minimum pair.
\end{theorem}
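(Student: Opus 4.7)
The proof naturally splits into the two directions. For ($\Rightarrow$) I would upgrade the convex-hull obstruction pictured in Figure~\ref{fig:incoherentpath} from monotone paths to arbitrary cellular strings, using central symmetry of $\diamond^n$ together with convexity of the projected lower hull. For ($\Leftarrow$) I would explicitly construct a witness linear functional $\ell'$ by specifying a convex piecewise-linear ``lower envelope'' function $\psi$ and reading off $\ell'$ from it.

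For ($\Rightarrow$), suppose the cellular string $F_1,\dots,F_m$ is coherent and contains a pair $\pm e_i$ with $i\neq n$. Let $\pi=(\ell,\ell')$ be the realizing 2-dimensional projection and let $f$ denote the convex lower-hull function of $\pi(\diamond^n)$. Set $\pi(e_n)=(A,B)$ and $\pi(e_i)=(a,b)$; linearity of $\pi$ forces $\pi(-e_n)=(-A,-B)$ and $\pi(-e_i)=(-a,-b)$. Jensen's inequality applied to $f$ using $f(\pm A)=\pm B$ yields $f(a)\le aB/A$ and $f(-a)\le -aB/A$; combining these with the coherence identities $f(\pm a)=\pm b$ forces $b=aB/A$, so all four projections are collinear. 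By convexity $f$ then coincides with the line $y=(B/A)x$ on all of $[-A,A]$, so the lower hull of $\pi(\diamond^n)$ is a single edge. Its preimage is a face of $\diamond^n$ containing both $\pm e_n$; since no proper face of $\diamond^n$ contains an antipodal pair, this face must be $\diamond^n$ itself. Hence $\ell'=(B/A)\ell$, contradicting the 2-dimensionality of $\pi$.

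For ($\Leftarrow$), write the transition vertices between consecutive cells as $w_0=-e_n<w_1<\cdots<w_m=e_n$ and set $x_j=\ell(w_j)$. By hypothesis each $v\in V:=(\bigcup_j F_j)\cap\mathrm{vert}(\diamond^n)$ other than $\pm e_n$ has the unique form $\sigma_i e_i$ with $\sigma_i\in\{\pm 1\}$ and $i\in[n-1]$. Choose any convex piecewise-linear $\psi\colon[-a_n,a_n]\to\R$ with $\psi(\pm a_n)=0$, breakpoints exactly at $x_1,\dots,x_{m-1}$, and strictly increasing slopes. Define $\ell'$ linearly by setting $\ell'(e_i)$ equal to $0$ if $i=n$, to $\psi(a_i)$ if $e_i\in V$, to $-\psi(-a_i)$ if $-e_i\in V$, and to any value in the open interval $(\psi(a_i),-\psi(-a_i))$ otherwise. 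A direct check, exploiting the uniform identity $\ell'(v)=\psi(\ell(v))$ for every $v\in V$, shows that each vertex of $F_j$ projects onto the $j$-th linear piece of $\psi$ while every vertex of $\diamond^n$ outside $V$ projects strictly above $\psi$. Hence the lower hull of $\pi(\diamond^n)$ is exactly the graph of $\psi$, and the preimages of its linear pieces are exactly $F_1,\dots,F_m$.

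The main obstacle I foresee is verifying nonemptiness of the open interval $(\psi(a_i),-\psi(-a_i))$ used to assign $\ell'(e_i)$ when neither $\pm e_i$ lies in $V$, and dually that the antipode of any vertex in $V$ projects strictly above $\psi$. Both reduce to the inequality $\psi(a_i)+\psi(-a_i)<0$ whenever $|a_i|<a_n$, which in turn follows from the elementary convexity fact that a nontrivial convex piecewise-linear function vanishing at both endpoints of an interval must be strictly negative throughout its interior.
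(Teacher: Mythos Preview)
Your argument is correct and follows essentially the same strategy as the paper: the obstruction direction exploits central symmetry together with convexity of the projected lower hull, and the construction direction defines the witness functional by linear independence so that the string vertices land on a prescribed convex piecewise-linear profile while everything else lands strictly above it. Your packaging via the abstract function $\psi$ and the single inequality $\psi(a_i)+\psi(-a_i)<0$ is a clean reformulation of the paper's explicit choice of slopes $-1/j$ and $\varphi\equiv 0$ on $S_0$, but the underlying mechanism is the same.
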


\begin{proof}
For both directions, by Lemma \ref{lem:cpgenorient}, we may assume without loss of generality that the linear functional $\ell: \R^{n} \to \R$ given by $\ell(e_{i}) = a_{i}$ satisfies $0 < a_{i} < a_{j}$ for all $i, j \in [n]$ with $i <j$. 

Suppose that a coherent cellular string contains $-e_{i}$ and $e_{i}$ as vertices in possibly distinct cells. Then, by the definition of coherence, there exists a projection $\pi: \diamond^{n} \to \R^{2}$ that takes $-e_{i}$ and $e_{i}$ to possibly distinct lower edges of some polygon, where $\pi = \ell \times \varphi$ for some linear functional $\varphi: \R^{n} \to \R$. Since $\pi(-e_{i})$ and $\pi(e_{i})$ lie on the lower edges, and $-e_{n}$ and $e_{n}$ are minimal and maximal respectively for $\varphi$, $\pi(e_{i})$ and $\pi(-e_{i})$ must lie below the line segment from $\pi(-e_{n})$ to $\pi(e_{n})$. It follows that the slope from $\pi(-e_{n})$ to $\pi(-e_{i})$ must be less than the slope from $\pi(-e_{n})$ to $\pi(e_{n})$. Similarly, the slope from $\pi(e_{i})$ to $\pi(e_{n})$ must be greater than the slope from $\pi(-e_{n})$ to $\pi(e_{n})$. Thus, we must have 
\[\frac{\varphi(e_{n}) - \varphi(e_{i})}{a_{n} - a_{i}} = \frac{\varphi(-e_{i}) - \varphi(-e_{n})}{-a_{i} + a_{n}} < \frac{\varphi(e_{n}) - \varphi(-e_{n})}{a_{n} + a_{n}}< \frac{\varphi(e_{n}) - \varphi(e_{i})}{a_{n} - a_{i}}, \]
a contradiction. 

Suppose instead we have a cellular string such that the set of vertices contained in some cell has only one pair of antipodes. The vertices contained in the cellular string may be partitioned into the subsets of positive and negative basis vectors: $S_{+} \subseteq \{e_{1}, e_{2}, \dots, e_{n}\}$ and $S_{-} \subseteq \{-e_{1}, -e_{2}, \dots, -e_{n}\}$. Let $S_{0} = \{e_{i}: -e_{i}, e_{i} \notin S_{+} \cup S_{-}\}$, the set of $e_{i}$ such that $\pm e_{i}$ does not appear in the path. Then, by our assumption that the path only contains a single pair of antipodes $-e_{n}$ and $e_{n}$, we must have 
\[S_{+} \cap -S_{-} = \{e_{n}\}.\]
It follows that $(S_{+} \cup -S_{-} \cup S_{0}) \setminus \{-e_{n}\} = \{e_{1}, e_{2}, \dots, e_{n}\}$ and is, in particular, linearly independent. Let $F_{1} < F_{2} < \dots < F_{k}$ denote the sequence of faces in the cellular string. Let $e_{-i} = -e_{i}$ and $a_{-i} = -a_{i}$ for $i \in \pm [n]$. To prove coherence, we must choose $\varphi: \R^{n} \to \R$ such that $\pi = \ell \times \varphi$ takes endpoints of the cellular string to lower vertices of $\pi(\diamond^{n})$ and vertices of cells to the interior of the edge between the endpoints. We will construct such a choice of $\varphi$ inductively first on the endpoints of the string and then interpolate to find the value on the interior points. 

By linear independence, we may define $\varphi$ however we choose for each vertex that appears in the cellular string. Let $e_{b_{j}}$ and $e_{c_{j}}$ denote the minimal and maximal vertices of $F_{j}$. Define $\varphi(e_{n}) = 0$. Define $\varphi(e_{c_{1}})$ to be $-(a_{c_{1}}+a_{n})$. Then the slope from $(-a_{n}, \varphi(-e_{n}))$ to $(a_{c_{1}}, \varphi(a_{c_{1}}))$ will be precisely $-1$. Define $\varphi$ inductively so that the slope from $(a_{b_{j}}, \varphi(e_{b_{j}}))$ to $(a_{c_{j}}, \varphi(e_{c_{j}}))$ is $\frac{-1}{j}$ for all $1 \leq j < k$. For each remaining vertex $v$ in each $F_{j}$, define $\varphi$ so that $\pi(v)$ lies on the line segment from $\pi(e_{b_{j}})$ to $\pi(e_{c_{j}})$. Such a choice is always possible by linear independence. Finally, define $\varphi$ to be $0$ for all vertices in $S_{0}$. 

 It remains to show that $\pi$ then satisfies the properties from Definition \ref{def:cohstring}. Observe that, since $\varphi(-e_{n}) = -\varphi(e_{n}) = 0$ and the slope between consecutive endpoints of the string is negative, $\varphi$ is negative for each endpoint of the cellular string other than $-e_{n}$ and $e_{n}$. Thus, by interpolating, $\varphi$ must be negative for each vertex on the interior of a cell. For vertices $e_{i}$ not in the string, there are two cases. If $-e_{i}$ is in the string, then $\varphi(e_{i}) = - \varphi(-e_{i}) \geq 0$. Otherwise, $e_{i} \in S_{0}$, so $\varphi(e_{i}) = 0 \geq 0$. Hence, all vertices $v$ not contained in some cell of the string must satisfy $\varphi(v) \geq 0$. Since $\varphi(-e_{n}) = \varphi(e_{n}) =0$, it follows that all vertices not in the string lie on or above the line segment from $\pi(-e_{n})$ to $\pi(e_{n})$. 
 
 Thus, the lower vertices of the polygon must be some subset of the vertices contained in the string. By construction, we defined the $F_{i}$ to each be mapped to an edge and so that the slope of each edge increases as $i$ increases. These edges yield a path from $\pi(-e_{n})$ to $\pi(e_{n})$. Since the slope is increasing, this path is the graph of a piece-wise linear convex function that lies below the line segment from $\pi(-e_{n})$ to $\pi(e_{n})$. It follows then that the convex hull of the path has vertices $\{\pi(e_{i}): e_{i} \text{ is an endpoint of the string}\}$. Furthermore, by construction once again, any vertex in a cell of the cellular string is mapped to the interior of the edge between the endpoints of that string. Hence, the projections of each $F_{i}$ are precisely the lower edges of the polygon meaning that the cellular string must be coherent by definition.
\end{proof}

\begin{figure}
    \centering
    \begin{tikzpicture}[scale = .8]
  \draw[black, thick] (0,0) -- (1,-1) -- (2,-1.8)-- (3, -2.4) -- (4,-2.8) -- (5, -3) -- (6, 0);
  \draw[black, thick] (0,0) -- (1,3) -- (2,2.8)-- (3, 2.4) -- (4,1.8) -- (5, 1) -- (6, 0);
  \draw (-.5,0) node[red] {$-x_{n}$}; 
  \draw (1,-1.25) node[red] {$x_{i_{1}}$};
  \draw (2,-2) node[red] {$x_{i_{2}}$};
  \draw (3, -2.65) node[red] {$x_{i_{3}}$};
  \draw (4,-3.05) node[red] {$x_{i_{4}}$}; 
  \draw (5, -3.25) node[red] {$x_{i_{5}}$}; 
  \draw (6.25,0) node[red] {$x_{n}$};
  \draw (1,3.25) node[blue] {$-x_{i_{5}}$};
  \draw (2,3) node[blue] {$-x_{i_{4}}$};
  \draw (3,2.65) node[blue] {$-x_{i_{3}}$};
  \draw (4,2) node[blue] {$-x_{i_{2}}$}; 
  \draw (5, 1.25) node[blue] {$x_{i_{1}}$}; 
  \draw (.5,0) node[green] {$x_{s_{1}}$};
  \draw (1.5,0) node[green] {$x_{s_{2}}$}; 
  \draw (4.5,0) node[green] {$-x_{s_{2}}$};
  \draw (5.5,0) node[green] {$-x_{s_{1}}$};
  \draw (2.5,0) node[green] {$x_{s_{3}}$};
  \draw (3.5,0) node[green] {$-x_{s_{3}}$};
\end{tikzpicture}
    \caption{The proof of Theorem \ref{CoherenceThm} may be visualized. Namely, the red $x_{i_{j}}$ in the image represent the endpoints of each cell in the string. Via linear independence, we may impose that the slopes between these vertices are increasing and that they all lie below the line between $-x_{n}$ and $x_{n}$. Furthermore, we may impose that all vertices in each cell are mapped to the interior of an edge via linear interpolation and linear independence. Then their antipodes, the blue $-x_{i_{j}}$, must lie above that line from $-x_{n}$ to $x_{n}$. For the remaining vertices, the green $x_{s_{i}}$, we may again, by linear independence, place them all on the line between $-x_{n}$ and $x_{n}$, which forces their antipodes to also lie on that line. }
    \label{fig:mainproof}
\end{figure}
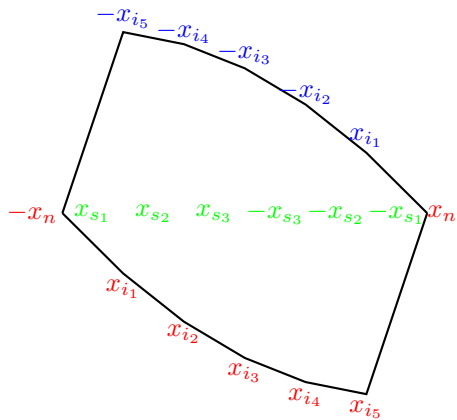

Interpreted for cellular strings corresponding to monotone paths, we established what was suggested in Section \ref{sec:bg}. Namely, a monotone path on $\diamond^{n}$ is coherent if and only if the only antipodes it contains are the maximum and minimum pair. Note the assumption that the linear functional is generic is necessary here.

\begin{prop}
For a cross-polytope and the orientation $\ell = \sum_{i=1}^{n} e_{i}^{T}$, the monotone path polytope is given by $\Delta_{d-1} -\Delta_{d-1}$. 
\end{prop}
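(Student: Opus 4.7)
The approach is to compute $MPP_{\ell}(\diamond^n)$ directly from its integral definition, exploiting the observation that $\diamond^n$ is the convex hull of the two parallel simplices $\Delta_{n-1} = \conv(e_1,\dots,e_n) \subseteq \ell^{-1}(1)$ and $-\Delta_{n-1} \subseteq \ell^{-1}(-1)$. For this particular orientation the fibers of $\ell : \diamond^n \to [-1,1]$ form a Minkowski-linear interpolation between these two simplices, which makes the fiber polytope integral tractable.

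First, I would identify the fiber $F_t := \diamond^n \cap \ell^{-1}(t)$ for each $t \in [-1,1]$. For $t \in (-1,1)$ the vertices of $F_t$ arise by intersecting edges of $\diamond^n$ with the hyperplane $\sum_k x_k = t$; among the edges $\{v,w\} \subseteq \{\pm e_i\}$ with $v \neq -w$, only the ``mixed'' edges $\{-e_i, e_j\}$ with $i \neq j$ cross this hyperplane, meeting it at $\tfrac{1+t}{2}e_j - \tfrac{1-t}{2}e_i$. Matching this list against the vertices of a Minkowski sum of scaled simplices yields the identity
\[ F_t \;=\; \tfrac{1+t}{2}\,\Delta_{n-1} \;+\; \tfrac{1-t}{2}\,(-\Delta_{n-1}), \qquad t \in [-1,1], \]
which also recovers the degenerate boundary fibers $F_{\pm 1} = \pm\Delta_{n-1}$.

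Next, I would substitute this into the Billera--Sturmfels fiber integral $MPP_{\ell}(\diamond^n) = \tfrac{1}{2}\int_{-1}^{1} F_t\, dt$ and invoke linearity of the Minkowski integral for Minkowski-linear families: if $F(t) = \alpha(t)A + \beta(t)B$ with $\alpha,\beta \geq 0$ and fixed polytopes $A,B$, then $\int F(t)\,dt = \bigl(\int \alpha\bigr)\,A + \bigl(\int \beta\bigr)\,B$, since any measurable selection of $F(t)$ decomposes as $\alpha(t)a(t) + \beta(t)b(t)$ with $a(t)\in A$, $b(t)\in B$, and $\int \alpha(t)\,a(t)\,dt \in \bigl(\int \alpha\bigr) A$ by convexity. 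Evaluating $\int_{-1}^{1} \tfrac{1\pm t}{2}\, dt = 1$ yields $MPP_{\ell}(\diamond^n) = \tfrac{1}{2}(\Delta_{n-1} - \Delta_{n-1})$, i.e.\ $\Delta_{n-1} - \Delta_{n-1}$ up to the usual overall scaling convention.

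The main obstacle is establishing the Minkowski decomposition of $F_t$; once it is in hand, the integration step is essentially mechanical. A parallel route is to specialize the restated Theorem~5.3 to the edge-paths $\{-e_i, e_j\}$ with $i \neq j$, each of which is a coherent monotone path (trivially, after an arbitrarily small generic perturbation of $\ell$), whose average-point contributions $\tfrac{1}{2}(e_j - e_i)$ are exactly the vertices of $\tfrac{1}{2}(\Delta_{n-1} - \Delta_{n-1})$; the direct fiber computation above, however, avoids any limiting argument and makes the Minkowski structure manifest.
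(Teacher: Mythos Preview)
Your argument is correct, but it takes a somewhat different route from the paper's. The paper does not integrate the fibers directly; instead it invokes the discrete description of the fiber polytope from Billera--Sturmfels, namely that $\Sigma(P,Q)$ equals the weighted Minkowski sum of the fibers over the barycenters of the cells of the subdivision of $Q$ induced by $\ell$. Because every vertex of $\diamond^{n}$ has $\ell$-value $\pm 1$, that subdivision of $[-1,1]$ is trivial (a single cell), so the fiber polytope is, up to scale, just the single fiber $\ell^{-1}(0)\cap\diamond^{n}$. The paper then recognizes $\diamond^{n}$ as the Cayley sum of $\Delta_{n-1}$ and $-\Delta_{n-1}$ and reads off the central slice via the Cayley trick.

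Your approach reproves the Cayley-type decomposition for \emph{every} fiber $F_t$ and then integrates using Minkowski-linearity. This is more hands-on and avoids citing either the discrete barycenter formula or the Cayley trick, at the cost of a small amount of extra computation. Conversely, the paper's route is shorter precisely because it front-loads those two citations; the key structural observation it exploits---that the induced subdivision of the base is trivial, collapsing the whole integral to a single fiber---is something your argument establishes implicitly but does not isolate. Your parenthetical ``parallel route'' via monotone-path averages is also sound, though note that for this degenerate $\ell$ every monotone path is a single edge and hence coherent outright; no perturbation is needed.
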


\begin{proof}
In \cite{BSFiberPoly}, they show that the fiber polytope is given by the minkowski sum of fibers of barycenters of the subdivision induced by the projection. In this case, the subdivision induced by the projection is trivial, so the monotone path polytope is given by $\ell^{-1}(0)$. In that case, we are taking a slice of the Cayley sum of $\Delta_{d-1}$ and $-\Delta_{d-1}$, so from either explicit computation or the Cayley trick as in \cite{Cayley}, we find that the resulting MPP is $\Delta_{d-1} - \Delta_{d-1}$. 
\end{proof}

All monotone paths are given by single edges, so it is immediate that they are all coherent unlike in the generic case. As an immediate corollary of this:
\begin{cor}
All monotone paths being coherent for one orientation does not imply that all monotone paths are coherent for all orientations. Furthermore, a polytope may not have all paths coherent for any generic orientation but still have some orientation for which all monotone paths are coherent.
\end{cor}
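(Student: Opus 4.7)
The plan is to use the cross-polytope $\diamond^n$ with $n \geq 3$ as a single witness for both clauses, pairing the preceding proposition with Theorem \ref{CoherenceThm}. First I would recall from the proposition immediately above that under the non-generic orientation $\ell_0 = \sum_{i=1}^{n} e_i^T$, every monotone path on $\diamond^n$ is a single edge and hence trivially coherent; this already supplies an orientation of $\diamond^n$ for which all monotone paths are coherent.

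For the complementary side, I would exhibit, for any generic orientation with $0 < a_1 < a_2 < \dots < a_n$, a monotone path that violates the antipodality condition of Theorem \ref{CoherenceThm}. The natural candidate is
\[
  \sigma = (-e_n,\; -e_2,\; e_1,\; e_2,\; e_n).
\]
Checking that $\sigma$ is a monotone path reduces to two routine verifications: each consecutive pair is non-antipodal (hence an edge, since $\diamond^n$ is simplicial with edges exactly between non-antipodal vertices), and the $\ell$-values $-a_n < -a_2 < a_1 < a_2 < a_n$ are strictly increasing. Both hold. Since $\sigma$ contains the two antipodal pairs $\{\pm e_n\}$ and $\{\pm e_2\}$, Theorem \ref{CoherenceThm} forces $\sigma$ to be incoherent. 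Combined with the previous paragraph, this already establishes the first clause: the polytope $\diamond^n$ has one orientation ($\ell_0$) for which every monotone path is coherent, and another orientation (any generic one) for which some monotone path is not.

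For the second clause, I would invoke Lemma \ref{lem:cpgenorient} to promote ``some generic orientation fails'' to ``every generic orientation fails'': every generic orientation of $\diamond^n$ is, up to an affine symmetry of $\diamond^n$, of the form treated above, and the functorial Lemma~2.3 of Billera--Sturmfels transports the incoherent path $\sigma$ along that symmetry to produce an incoherent monotone path for the given orientation. Thus $\diamond^n$ admits no generic orientation with all monotone paths coherent, yet $\ell_0$ is an (admittedly non-generic) orientation for which every monotone path is coherent. The only genuinely creative step in the whole argument is writing down the path $\sigma$; the rest is bookkeeping, and I anticipate no real obstacle.
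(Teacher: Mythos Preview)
Your proposal is correct and follows essentially the same approach as the paper, which states the corollary as immediate from the preceding proposition (all monotone paths for $\ell_0=\sum e_i^T$ are single edges, hence coherent) together with Theorem~\ref{CoherenceThm}. You simply make explicit the incoherent witness path $\sigma=(-e_n,-e_2,e_1,e_2,e_n)$ and the appeal to Lemma~\ref{lem:cpgenorient} for arbitrary generic orientations; both are exactly the ingredients the paper has in mind.
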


As stated in Section \ref{sec:bg}, by Theorem $2.1$ of \cite{BSFiberPoly}, the coherent paths correspond exactly to the vertices of the monotone path polytope. From this result, we may immediately compute the number of vertices.

\begin{cor}
\label{cor:seqbij}
For a generic linear functional $\varphi$, $MPP_{\varphi}(\diamond^{n})$ has precisely $3^{n-1} -1$ vertices. In particular, they correspond to elements of $\{-1, 1, 0\}^{n-1} \setminus \{\mathbf{0}\}$.
\end{cor}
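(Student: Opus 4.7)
The plan is to combine Theorem \ref{CoherenceThm} with the standard fact (Theorem 2.1 of \cite{BSFiberPoly}) that the vertices of a monotone path polytope are in bijection with the coherent monotone paths. Under this correspondence it suffices to count monotone paths on $\diamond^{n}$ from $-e_{n}$ to $e_{n}$ that contain no antipodal pair other than $\{-e_{n},e_{n}\}$, and to match them with $\{-1,0,1\}^{n-1}\setminus\{\mathbf{0}\}$.

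By Lemma \ref{lem:cpgenorient} we may assume the orientation $\varphi$ satisfies $0<a_{1}<\dots<a_{n}$, so the unique $\varphi$-minimum is $-e_{n}$, the unique $\varphi$-maximum is $e_{n}$, and every other vertex has strictly intermediate $\varphi$-value. First I would observe that a monotone path is determined by the multiset of its intermediate vertices, because the order is forced by the values of $\varphi$. Next, recall that two vertices of $\diamond^{n}$ are joined by an edge exactly when they are not antipodal. Thus, given any subset $S\subseteq\{\pm e_{1},\dots,\pm e_{n-1}\}$ that contains no antipodal pair, listing the elements of $\{-e_{n}\}\cup S\cup\{e_{n}\}$ in order of increasing $\varphi$-value produces a sequence in which consecutive elements are non-antipodal (since $-e_{n},e_{n}$ are antipodal only to each other and no antipodal pair appears inside $S$), hence a genuine monotone edge path.

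Now I would apply Theorem \ref{CoherenceThm}: the monotone paths arising this way are exactly the coherent ones, because they are precisely the monotone paths containing no antipodal pair apart from $\{-e_{n},e_{n}\}$. So coherent monotone paths biject with antipode-free subsets $S\subseteq\{\pm e_{1},\dots,\pm e_{n-1}\}$ for which the induced sequence from $-e_{n}$ to $e_{n}$ is actually a path of length $\geq 2$. The antipode-free subsets are enumerated by choosing, independently for each $i\in[n-1]$, one of the three options ``include $e_{i}$'', ``include $-e_{i}$'', or ``include neither'', giving $3^{n-1}$ subsets. The only forbidden case is $S=\emptyset$, because then the alleged path would be the single step $-e_{n}\to e_{n}$, and $-e_{n}$ and $e_{n}$ are antipodal so not joined by an edge. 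Subtracting this case yields $3^{n-1}-1$ coherent monotone paths, and hence $3^{n-1}-1$ vertices of $MPP_{\varphi}(\diamond^{n})$.

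The explicit bijection with $\{-1,0,1\}^{n-1}\setminus\{\mathbf{0}\}$ is then the natural sign-vector assignment: to the subset $S$ associate $\sigma\in\{-1,0,1\}^{n-1}$ with $\sigma_{i}=+1$ if $e_{i}\in S$, $\sigma_{i}=-1$ if $-e_{i}\in S$, and $\sigma_{i}=0$ otherwise; the excluded sign vector $\mathbf{0}$ corresponds to the excluded empty intermediate set. The only step requiring care is the verification that arbitrary antipode-free intermediate sets really do yield valid edge paths, but this follows directly from the edge description of $\diamond^{n}$ together with the fact that $-e_{n}$ and $e_{n}$ are themselves non-antipodal to every $\pm e_{i}$ with $i<n$, so no further combinatorial constraints arise.
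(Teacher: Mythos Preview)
Your proposal is correct and follows essentially the same approach as the paper: use Theorem~\ref{CoherenceThm} together with the edge description of $\diamond^{n}$ to identify coherent monotone paths with antipode-free choices of $e_{i}$, $-e_{i}$, or neither for each $i\in[n-1]$, then exclude the empty choice since $-e_{n}$ and $e_{n}$ are not adjacent. Your write-up is in fact more careful than the paper's, which leaves the reason for excluding the empty set implicit.
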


\begin{proof}
Recall that any two non-antipodal points of the cross-polytope are connected by an edge. It follows then that the coherent monotone paths consists of choices $e_{i}, -e_{i}$ or neither to include in our sequence of points. That gives $3^{n-1}$ possible choices. Since we have to include at least $1$ element between $-e_{n}$ and $e_{n}$, we obtain that $\diamond^{n}$ has precisely $3^{n-1}-1$ coherent monotone paths. Since the vertices of $MPP_{\varphi}(\diamond^{n})$ correspond to coherent monotone paths, $MPP_{\varphi}(\diamond^{n})$ has precisely $3^{n-1}-1$ vertices.
\end{proof}

We may strengthen this result to find explicit vertices via computing the average value of each coherent monotone path.

\begin{proof}[Proof of Theorem \ref{MainTheorem1}(a)]
Use the characterization of coherent monotone paths in Theorem \ref{CoherenceThm} in combination with Theorem $5.3$ from \cite{BSFiberPoly}, and the result is immediate.
\end{proof}

At this point, we may prove Corollary \ref{cor:MainCS}.

\begin{proof}[Proof of Corollary \ref{cor:MainCS}]
Apply Lemma $2.3$ from \cite{BSFiberPoly} as stated in Section \ref{sec:bg} to the result of Theorem \ref{MainTheorem1}(a).
\end{proof}

While more involved, we may similarly provide an explicit facets based description of the polytope from our proof of Theorem \ref{CoherenceThm}. 

\begin{proof}[Proof of Theorem \ref{MainTheorem1}(b)]
To find the facet defining relations for the MPP, we follow the method outlined by Ziegler in Section $9.1$ of \cite{zieg}. Namely, the facet defining inequalities are obtained from the linear functional that yields the polygon whose lower vertices correspond to maximal coherent subdivisions. That is, for such a linear functional $\varphi$, the inequality is given by $\varphi(x) \geq \varphi(v)$, where $v$ is the vertex that is minimized by $\varphi$ on the polygon corresponding to the maximal coherent cellular string. From the combinatorial characterization in Theorem \ref{MainTheorem1}(b), the facets correspond precisely to the maximal intervals in the sign poset. Maximal intervals are obtained from starting with a choice of a separating vertex $e_{i}$ and choosing a maximal length monotone path through $e_{i}$. In the notation from the proof of Theorem \ref{CoherenceThm}, these are precisely the subdivisions corresponding $F_{1} < F_{2}$ such that $\pm F_{1} \cup \pm F_{2} = \{k:k \in \pm [n-1]\}$, where we remove $e_{n}$ from $F_{2}$ and $-e_{n}$ from $F_{1}$ and identify each $e_{i}$ with $i$. To obtain a lifting functional, by following the proof of Theorem \ref{CoherenceThm}, we define \[\varphi(e_{n}) = 0 \text{ and } \varphi(e_{i}) = -a_{i} - a_{n}.\]
Then for the remaining vertices in $F_{1}$ we linearly interpolate between $0$ and $-a_{i} - a_{n}$. For the vertices in $F_{2}$, we provide a similar interpolation. In particular, $\varphi(e_{k}) = -a_{k} - a_{n}$ if $k \in F_{1}$ and $\varphi(e_{k}) = \frac{a_{i} + a_{n}}{a_{n} - a_{i}}(a_{k} - a_{n})$ if $k \in F_{2}$
\[\varphi(e_{k}) = \begin{cases}  -a_{k} - a_{n} \text{ if } k \in F_{1} \\   \frac{a_{i} + a_{n}}{a_{n} - a_{i}}(a_{k} - a_{n}) \text{ if } k \in F_{2}\\ 0 \text{ if } k = n\end{cases}.\]
We denote any functional of this form as $\varphi_{i, \varepsilon}$, where $i \in \pm [n-1]$ is the choice of the splitting vertex, and $\varepsilon: [n-1] \to \pm 1$ denotes the sign sequence of the vertices. Then $F_{1} = \{k: \varepsilon(k)k \leq i\}$ and $F_{2} = \{k: \varepsilon(k)k \geq i\}$, which yields the result.
\end{proof}

Thus, now we have an explicit description of our polytope in terms of both vertices and facets. We may take this description a step further and use our characterization of coherent cellular strings to obtain a complete characterization of the face lattice of the MPP of $\diamond^{n}$ in terms of the sign poset on $\{+, - ,0\}^{n-1} \setminus \mathbf{0}$. 
\begin{figure}
    \centering
    \includegraphics[scale = .25]{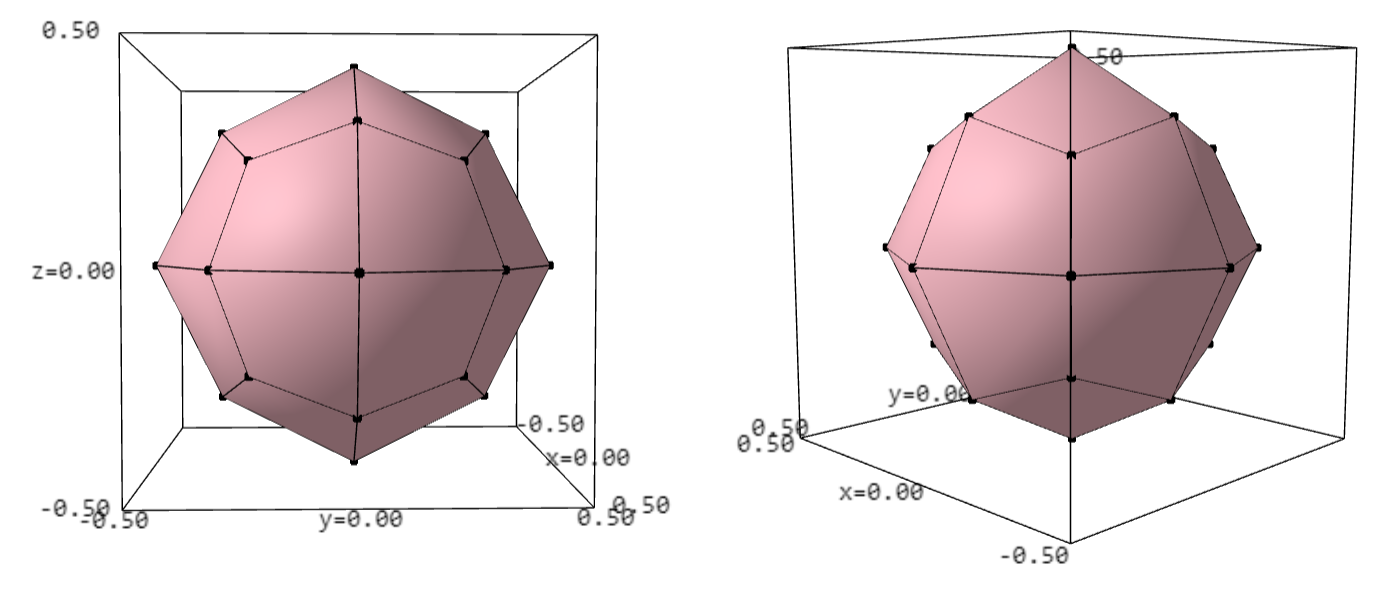}
    \caption{A plot of $(C_{3} + \diamond^{3})^{\Delta}$ made using \cite{sage}. By Theorem \ref{MainTheorem1}(c), any MPP of $\diamond^{4}$ for a generic orientation is combinatorially equivalent to the pictured polytope.}
    \label{fig:cpmpp}
\end{figure}

\begin{proof}[Proof of Theorem \ref{MainTheorem1}(c)]
By Theorem $2.1$ of \cite{BSFiberPoly}, the face lattice of the monotone path polytope corresponds to the poset of coherent cellular strings under refinement. Note that a coherent cellular string is uniquely determined by two pieces of data: its endpoints and the vertices included in the cells between each endpoint. These two pieces of data may be interpreted as two monotone paths, the one with vertices given by the set of endpoints, $E$, of the cellular string and the on given by $V$, the set of vertices that appear anywhere in the cellular string. Clearly $E \subseteq V$, which translates via the bijection to $V$ and $E$ being comparable. Furthermore, the vertices contained in the cellular string correspond exactly to all monotone paths with a set of vertices $S$ such that $E \subseteq S \subseteq V$. Hence, the partial order of inclusion of faces via this bijection is equivalent to the partial order given by inclusion of intervals. 

Note that each interval $I = [a, b]$ in $\{0,-1,1\}^{n-1} \setminus \{\mathbf{0}\}$ is Boolean. Furthermore, $\text{conv}([a,b])$ is isomorphic to $\text{conv}([a,b] - a) = \text{conv}([\mathbf{0}, b-a])$, where $\mathbf{0}$ is adjoined in the natural way to the partial order. Let $k$ be the length of the interval. Then $b-a$ will have precisely $k$ nonzero entries of either $0$'s or $1'$s. Let $A$ be the linear map defined by $A(e_{i}) = \sigma(i)e_{i}$, where $\sigma(i)$ denotes the sign of $e_{i}$ in $b-a$. Let $S = \{i \in [n-1]: \sigma(i) \neq 0\}$. Then $A([0,b-a]) = \left[0, \sum_{s \in S} e_{s} \right]$. By construction, $A$ is then an isometry taking $\text{conv}(I)$ to the unit cube. 

Hence, each interval corresponds to a unit cube of dimension $\leq n-2$ with vertices contained in $\{\pm 1, 0\}^{n-1} \setminus \{\mathbf{0}\}$. The converse is similar, since each unit cube has vertices corresponding to an interval in the poset $\{+,-,0\}^{n-1} \setminus \{\mathbf{0}\}$
\end{proof}

 The next step is to show that this combinatorial type has a nice representative given by $(C_{n-1} + \diamond^{n-1})^{\Delta}$.

\begin{proof}[Proof of Theorem \ref{MainTheorem1}(d)]
 Note that the vertices of $C_{n} + \diamond^{n}$ are obtained precisely by adding the unique maximal vertices on each for a linear functional. Let $\ell$ be a linear functional. Then the max on $\diamond^{n}$ is the vertex corresponding to the coordinate of maximum absolute value together with the corresponding sign, and the max on $C_{n}$ returns the subset with $1$'s for positive elements and $-1$'s for negative elements. The element of maximum absolute value could either be positive or negative.

The resulting polytope has vertices $\{B_{d}\left(2e_{1} + \sum_{i=2}^{d} e_{i}\right)\}$, where $B_{d}$ is the set of signed permutation matrices. Let $S \in \{+, -,0\}^{n} \setminus \{\mathbf{0}\}$. Then $S$ induces a partitions of $[n]$ into $S_{+} \cup S_{-} \cup S_{0}$, and there is a naturally associated linear functional $\varphi_{S}$ given by $\sum_{i \in S_{+}} e_{i}^{T} - \sum_{j \in S_{-}} e_{j}^{T}.$ The vertices maximized by $\varphi_{S}$ are precisely 
\[\{\text{sign}(k)e_{k}: k \in S_{+} \cup S_{-}\} + \{\sum_{a \in A} \varepsilon(a) e_{a}: A \in S_{0}, \varepsilon: A \to \{\pm 1\}\} + \sum_{k \in S_{+} \cup S_{-}} \text{sign}(k)e_{k}. \]
These vectors span the affine hyperplane given by 
\[\sum_{i \in S_{+}} x_{i} - \sum_{j \in S_{-}} x_{j} = |S_{+}| + |S_{-}| + 1.\]
Thus, each sign vector $\varphi_{S}$ corresponds to a facet of the resulting polytope. Let $\varphi$ be a linear functional. Then any vertex maximized by that linear functional would also be maximized by the linear functional with the same sign pattern. Hence, the sign vectors induce all of the facets of $C_{n} + \diamond^{n}$, which gives us a polyhedral formulation of this polytope.

Namely, for any partition $S_{+} \cup S_{-} \cup S_{0}$ of $n$, we must have
\[\frac{1}{|S_{+}| + |S_{-}| + 1}\left(\sum_{i \in S_{+}} x_{i} - \sum_{j \in S_{-}} x_{j}\right) \leq 1.\]
These are precisely the relations given by maximizing the sign functionals. Observe that if a sign vector contains $0$'s, then any choice of $\pm e_{i}$ for $i \in S_{0}$ is allowable for a vertex in that facet. Furthermore, if two facets have distinct positive sets or negative sets, then they cannot intersect, since that imposes the sign of $e_{i}$ for any vertex in the set. It follows that two facets intersect if and only if their corresponding sign vectors are comparable. In particular, $m$-dimensional faces correspond exactly to intervals of sign vectors in the poset of length $n-m$. It follows then that the face lattice of this polytope is isomorphic to the lattice of intervals of the sign poset under reverse inclusion. Hence, we have $(C_{n-1} + \diamond^{n-1})^{\Delta}$ and $\text{MPP}_{\varphi}(\diamond^{n})$ are combinatorially equivalent.
\end{proof}
 An advantage of this representation is that we may read off each vertex for a sign vector as
\[\frac{1}{|S_{+}| + |S_{-}|+1} \left(\sum_{i \in S_{+}} e_{i} - \sum_{j \in S_{-}} e_{j} \right). \]
One may appeal to the theory of anti-prisms for an alternative proof of Theorem \ref{MainTheorem1}(d). Namely, for perfectly centered polytopes, Bj\"{o}rner showed in \cite{bjornerantiprism} that the lattice of intervals in the face lattice of a polytope $P$ under inclusion is isomorphic to the face lattice of $(P + P^{\Delta})^{\Delta}$. Alongside this result from the characterization of the face lattice, we also now have a combinatorial framework for calculating the $f$-vector.

\begin{proof}[Proof of Theorem \ref{MainTheorem1}(e)]
From Theorem \ref{MainTheorem1}(b), faces correspond exactly to elements of the poset of intervals in the sign poset. Namely, they are identified precisely by pairs of elements of comparable elements of the poset. An $m$-face corresponds to pairs of elements of distance $m$ from each other. That is one has $k$ nonzero entries, and the other has $k+m$ nonzero entries including the $k$ nonzero entries of the starting point. Thus, the faces correspond to flags of length two of subsets of $n-1$ of subsets of size $k$ and $k + m$ counted by $\binom{n-1}{k,m, n-k-m-1}$ together with $2^{k+m}$ choices of signs for each vertex contained in the flag. Then we have that

\[f_{m}(MPP_{\pi}(\diamond^{n})) = \sum_{k=1}^{n-m-1} \binom{n-1}{k,m, n-k-m-1} 2^{k+m}. \]
\end{proof}

Again by Theorem $2.1$ of \cite{BSFiberPoly}, edges in the monotone path polytope correspond to refinements of pairs of coherent monotone paths. Geometrically, we may interpret this refinement as two monotone paths agreeing everywhere except on a single two dimensional face. In the sense of the flip graph, this interpretation means that the two monotone paths differ by a polygonal flip. From this observation, we obtain the following lemma. 

\begin{lemma}
Two vertices in the $MPP$ of $\diamond^{n}$ are adjacent if and only if their corresponding vectors are distance one from one another in the Taxi Cab metric. 
\end{lemma}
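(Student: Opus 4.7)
The plan is to combine Theorem $2.1$ of \cite{BSFiberPoly}, which identifies edges of a fiber polytope with cellular strings obtained by a single polygonal flip, together with the coherence characterization in Theorem \ref{CoherenceThm} and the sign-vector encoding from Corollary \ref{cor:seqbij}. By Theorem $2.1$ of \cite{BSFiberPoly}, two vertices of $MPP_{\varphi}(\diamond^n)$ are adjacent if and only if the corresponding coherent monotone paths $P$ and $P'$ differ by a flip on a single $2$-face $F$ of $\diamond^n$: the two paths agree outside $F$, and on $F$ one uses the direct min-to-max edge while the other passes through the middle vertex $v$ of $F$. The flip thus toggles whether $v$ lies in the path, subject to the resulting coarsened cellular string being coherent.

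For the forward direction, I would first observe that $v$ cannot equal $\pm e_n$, since $-e_n$ and $e_n$ are the global $\ell$-extremes on $\diamond^n$ and so cannot be strictly between two other vertices in $\ell$-value. Hence $v = \epsilon e_j$ with $j \in [n-1]$ and $\epsilon \in \{\pm 1\}$, so the sign vectors of $P$ and $P'$ agree outside position $j$, where one is $0$ and the other is $\epsilon$; this is precisely Taxi Cab distance $1$.

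For the reverse direction, given sign vectors $S, S'$ with $S_j = 0$, $S'_j = \epsilon$, and agreeing elsewhere, the path $P'$ is obtained from $P$ by inserting $\epsilon e_j$ at its unique monotone position between consecutive vertices $p_{i-1}$ and $p_{i+1}$ of $P$. The crucial verification is that $\{p_{i-1}, \epsilon e_j, p_{i+1}\}$ is a genuine $2$-face of $\diamond^n$, i.e., no two of its vertices are antipodes. The vertex $\epsilon e_j$ cannot be antipodal to either neighbor since $-\epsilon e_j \notin P'$ (else $P'$ would contain two antipodal pairs, contradicting coherence via Theorem \ref{CoherenceThm}), and $p_{i-1}, p_{i+1}$ can only be antipodes if $\{p_{i-1}, p_{i+1}\} = \{-e_n, e_n\}$---forcing $P'$ to have only the three vertices $\{-e_n, \epsilon e_j, e_n\}$ and $P = \{-e_n, e_n\}$, which is not a path on $\diamond^n$ since antipodes share no edge. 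This degenerate case corresponds to $S = \mathbf{0}$ and is excluded by hypothesis. Once $F$ is confirmed to be a $2$-face, the coarsened cellular string has the same vertex set as $P'$ and is coherent by Theorem \ref{CoherenceThm}, so Theorem $2.1$ of \cite{BSFiberPoly} yields the MPP edge. The main obstacle is precisely this degenerate triangle scenario, which is resolved cleanly by the requirement that both sign vectors be nonzero.
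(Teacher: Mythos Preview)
Your proposal is correct and follows the same approach as the paper: both identify edges of the MPP with polygon flips on the simplicial $\diamond^n$ via Theorem~$2.1$ of \cite{BSFiberPoly}, observe that such a flip inserts or deletes a single intermediate vertex, and translate this into the sign-vector encoding from Corollary~\ref{cor:seqbij}. Your version is more detailed---explicitly verifying that the flip triangle is a genuine $2$-face and invoking Theorem~\ref{CoherenceThm} for coherence of the coarsened string---whereas the paper dispatches these checks tersely with the phrase ``the connectivity of $\diamond^{n}$ allows us to perform this operation for any element of the sequence.''
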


\begin{proof}
Recall that $\diamond^{n}$ is simplicial. It follows that a polygonal flip either deletes a vertex from a path or adds a single, new vertex. Deleting or adding a vertex corresponds to changing a $1$ or $-1$ to a $0$ or a $0$ to a $1$ or $-1$ in the sequence bijection from Corollary \ref{cor:seqbij}. The connectivity of $\diamond^{n}$ allows us to perform this operation for any element of the sequence. Two sequences of $1$'s, $0$'s, and $-1$'s are at distance $1$ in the Taxi-cab metric if and only if they agree on all but one entry in which one is a $0$ and the other is a $-1$ or $1$, which yields the result.
\end{proof}

Via explicit computation, we may then compute the diameter of the $MPP$ of $\diamond^{n}$:

\begin{proof}[Proof of Theorem \ref{MainTheorem1}(f)]
By the triangle inequality,
\[\sup_{x,y \in \text{Verts}(MPP(\delta^{n}))} |x-y|_{1} \geq |\sum_{i=1}^{n-1} e_{i} - \sum_{i=1}^{n} -e_{i}|_{1} \geq 2(n-1).\]
Since each step along an edge can change the distance by at most one, we have the diameter is at least $2(n-1)$. To see that it is at most $2(n-1)$ may be seen by taking two vectors and changing the coordinates by $\pm 1$ until one vector equals the other. Each coordinate change represents a single step, and the path requires at most $2(n-1)$ coordinate changes. The only detail is avoiding the origin, and that is also easy.
\end{proof}

Now that we understand the graph of $MPP$ of $\diamond^{n}$, to obtain a more complete description of the space of monotone paths, we will describe the flip graph. The flip graph has as its vertices the set of all monotone paths on a polytope with edges given by polygon flips. We then enumerate its vertices and compute its diameter. 

\begin{proof}[Proof of Theorem \ref{MainTheorem1}(g)]
A monotone path corresponds to a subsequence $s_{1}, s_{2}, \dots, s_{m}$ of 
\[(-e_{n-1}, -e_{n-2}, \dots, -e_{1}, e_{1}, e_{2}, \dots, e_{n-1})\]
such that $s_{k} + s_{k+1} \neq 0$, since all vertices are connected to all vertices other than their antipodes. There are $2^{2(n-1)}-1$ non-empty subsets of $\{ e_{i} : i \in \{\pm 1, \pm 2, \dots, \pm n-1\}$. Then $2^{2(n-2)}$ of those subsets include $-e_{1}, e_{1}$, $2^{2(n-3)}$ include $-e_{2}, e_{2}$ but neither of $-e_{1}, e_{1}$, and in general $2^{2(n-1-k)}$ include $-e_{k}, e_{k}$ but none of $e_{j}$, where $|j| < |k|$. Hence, one may easily verify via standard results for geometric series, the resulting number of possible sequences is 
\[2^{2(n-1)}-1 - \sum_{k=1}^{n-1} 2^{2(n-k-1)}  = \frac{2^{2n-1} - 2}{3}.\]

For the diameter of the flip graph, since $\diamond^{n}$ is simplicial, two monotone paths are adjacent if and only if they differ by the addition or removal of a single vertex. A vertex may only be added or removed if it does not introduce consecutive antipodal points. Note that, given these restrictions, the distance between the path given by $e_{1}$ and the path $(-e_{n-1}, -e_{n-2}, \dots, -e_{1}, e_{2}, e_{3}, \dots, e_{n})$ is at least $2(n-1)$. By starting with removing all negative vertices starting with $-e_{n-1}$ and ending with $-e_{1}$ and then adding $e_{1}$ and removing $e_{2}$ through $e_{n}$ we achieve a sequence of flips going between these paths of length precisely $2(n-1)$. Hence, the distance between those points is precisely $2(n-1)$. 

Let $e_{s_{i}}$ and $e_{t_{j}}$ be two different paths. Let $s_{-}$ and $s_{+}$ denote the maximal negative element and minimal positive element of $s$ respectively. define $t_{-}$ and $t_{+}$ similarly. If $s_{-} = t_{-}$ and $s_{+} = t_{+}$, we may go from $s$ to $t$ by adding elements from $t$ that are not in $s$ to $s$ and taking away elements from $s$ that are not in $t$. Such a path will have length at most $2(n-2)$. 

If $s_{-} < t_{-}$ and $s_{+} = t_{+}$, then we may add $t_{-}$ to $s$ and follow the same strategy. This will result in a sequence of moves of length at most $2(n-2) + 1$. A similar idea works for any of the possible cases in which $s_{-} = t_{-}$ or $s_{+} = t_{+}$. 

Suppose that $s_{-} < t_{-}$ and $s_{+} < t_{+}$. If $t_{-} \neq - s_{+}$, we may add $t_{-}$ to the list $s_{-}$. Then we may follow the same strategy for the remaining list keeping $s_{+}$ and $t_{-}$. Then, at the end, we remove $s_{+}$. The result must take fewer that $2(n-2)+2 \leq 2(n-1)$ moves. Suppose instead that $s_{-} < t_{-} = -s_{+} < s_{+} < t_{+}$. First modify all elements of $s$ greater that $s_{+}$ to agree with $t$. If $t_{+} = -s_{-}$, remove $t_{+}$. Otherwise, leave it. In both cases, add $t_{-}$, add $t_{+}$ back and modify all elements $< t_{-}$ to agree with $t$. The result takes $\leq 2(n-2)- 1+3 = 2(n-1)$ moves, since $e_{1} < t_{+}$.

The only remaining case is that $s_{-} < t_{-}$ and $s_{+} > t_{+}$. In that case, add $t_{-}$ and $t_{+}$ to $s$ and make the required changes. The result takes fewer than $2(n-2) + 2$ moves. Hence, the diameter of the flip graph is $2(n-1)$.

The longest distance to the nearest coherent path for a monotone path may be computed similarly. Start with a path $s$ with $s_{-}$ and $s_{+}$ defined as before. If $s_{+} < - s_{-}$, remove all parts of antipodal pairs after $s_{+}$. Otherwise, remove all parts of antipodal pairs before $s_{-}$. Since there are at most $n-2$ elements before $s_{-}$ and $s_{+}$, the distance to the nearest coherent path is at most $n-2$. This bound is attained for the path $(-e_{n-1}, -e_{n-2}, \dots, -e_{1}, e_{2}, e_{3}, \dots, e_{n-1})$. 
\end{proof}
Thus, the total number of monotone paths grows at a rate of $\Theta(4^{n})$, which is exponentially faster than growth rate of the number of coherent monotone paths, which grows at a rate of $\Theta(3^{n})$. That last proof concludes our description of the structure of monotone paths on the cross-polytopes and our proof of Theorem \ref{MainTheorem1}.

\section*{Acknowledgments}  We are grateful to Lionel Pournin, Raman Sanyal, and Bernd Sturmfels for useful comments and support. The authors gratefully acknowledge partial support from NSF DMS-grant 1818969.
\bibliographystyle{amsplain}
\bibliography{References.bib} 
%    Bibliographies can be prepared with BibTeX using amsplain,
%    amsalpha, or (for "historical" overviews) natbib style.
%    Insert the bibliography data here.

%%%%%%%%%%%%%%%%%%%%%%%%%%%%%%%%%%%%%%%%%%%%%%%%%%%%%%%%%%%%%%%%%%%%%%%%%%%%%%%%%%%%%

\end{document}